\documentclass[12pt]{amsart}
\usepackage{amssymb,verbatim,amscd,amsmath,graphicx}
\usepackage{graphicx}
\usepackage{caption}
\usepackage{subcaption}
\usepackage{pdfsync}
%\usepackage{showkeys}
%\nonstopmode

\textwidth=16.00cm
\textheight=22.00cm
\topmargin=0.00cm
\oddsidemargin=0.30cm
\evensidemargin=0.30cm
\headheight=0cm
\headsep=0.5cm
\setlength{\parskip}{3pt}

\numberwithin{equation}{section}
\hyphenation{semi-stable}

\newtheorem{theorem}{Theorem}[section]

\newtheorem{proposition}[theorem]{Proposition}
\newtheorem{corollary}[theorem]{Corollary}
\newtheorem{conjecture}[theorem]{Conjecture}

\theoremstyle{definition}
\newtheorem{definition}[theorem]{Definition}
\newtheorem{def-prop}[theorem]{Definition-Proposition}
\newtheorem{obs}[theorem]{Observation}
\newtheorem{remark}[theorem]{Remark}

\DeclareMathOperator{\depth}{depth}

\DeclareMathOperator{\Ass}{Ass}

\newcommand{\mf}{\mathfrak{m}}

\def\1{{\bf 1}}
\def\0{{\bf 0}}

%%%%%%%%%%%%%%%%%%%%%%%%%%%%%%%%%%%%%%%%%%%%%%%%%%%%%%%%%%%%%%%%%%%%%%%%%%%%%%%%%%%

\begin{document}

%%%%%%%%%%%%%%%%%%%%%%%%%%%%%%%%%%%%%%%%%%%%%%%%%%%%%%%%%%%%%%%%%%%%%%%%%%%%%%%%%%%

\title[Persistence property and non-increasing depth]{Squarefree Monomial Ideals that Fail the Persistence Property and Non-increasing Depth}

\author{Huy T\`ai H\`a}
\address{Tulane University \\ Department of Mathematics \\
6823 St. Charles Ave. \\ New Orleans, LA 70118, USA}
\email{tha@tulane.edu}
\urladdr{http://www.math.tulane.edu/$\sim$tai/}

\author{Mengyao Sun}
\address{Tulane University \\ Department of Mathematics \\
6823 St. Charles Ave. \\ New Orleans, LA 70118, USA}
\email{msun@tulane.edu}

\keywords{persistence, non-increasing depth, associated primes, monomial ideals, cover ideals, critical graphs}
\subjclass[2000]{13C15, 15P05, 05C15, 05C25, 05C38}
\thanks{H\`a is partially supported by the Simons Foundation (grant \#279786).}

\begin{abstract}
In a recent work \cite{KSS}, Kaiser, Stehl\'ik and \v{S}krekovski provide a family of critically 3-chromatic graphs whose expansions do not result in critically 4-chromatic graphs, and thus give counterexamples to a conjecture of Francisco, H\`a and Van Tuyl \cite{FHVT}. The cover ideal of the smallest member of this family also gives a counterexample to the persistence and non-increasing depth properties. In this paper, we show that the cover ideals of \emph{all} members of their family of graphs indeed fail to have the persistence and non-increasing depth properties.
\end{abstract}

\maketitle

\begin{center}
\emph{Dedicate to Professor Ng\^o Vi\^et Trung in honors of his sixtieth birthday}
\end{center}

%%%%%%%%%%%%%%%%%%%%%%%%%%%%%%%%%%%%%%%%%%%%%%%%%%%%%%%%%%%%%%%%%%%%%%%%%%%%%%

\section{Introduction} \label{sec.intro}

Let $k$ be a field and let $R = k[x_1, \dots, x_n]$ be a polynomial ring over $k$. Let $I \subseteq R$ be a homogeneous ideal. It is known by Brodmann \cite{Brod1} that the set of associated primes of $I^s$ stabilizes for large $s$, that is, $\Ass(R/I^s) = \Ass(R/I^{s+1})$ for all $s \gg 0$. However, the behavior of these sets can be very strange for small values of $s$. The ideal $I$ is said to have the \emph{persistence property} if
$$\Ass(R/I^s) \subseteq \Ass(R/I^{s+1}) \ \forall \ s \ge 1.$$
It is also known by Brodmann \cite{Brod2} that $\depth (R/I^s)$ takes a constant value for large $s$. The behavior of $\depth(R/I^s)$, for small values of $s$, can also be very complicated. The ideal $I$ is said to have \emph{non-increasing depth} if
$$\depth(R/I^s) \ge \depth(R/I^{s+1}) \ \forall \ s \ge 1.$$

Associated primes and depth of powers of ideals have been extensively investigated in the literature (cf. \cite{BHH, FHVT1, FHVT, FHVT2, HM, HH, HQ, HRV, HV, MMV, M, MV, TT}). Even for monomial ideals, it is difficult to classify which ideals possess the persistence property or non-increasing depth. In this case, when $I$ is a monomial ideal, the two properties are related by the fact that $I$ possesses the persistence property if all monomial localizations of $I$ have non-increasing depth. Herzog and Hibi \cite{HH} gave an example where $\mf = (x_1, \dots, x_n) \in \Ass(R/I^s)$ for small even integers $s$ (whence $\depth(R/I^s) = 0$) and $\mf \not\in \Ass(R/I^s)$ for small odd integers $s$ (whence $\depth(R/I^s) > 0$). Squarefree monomial ideals behave considerably better than monomial ideals in general, and many classes of squarefree monomial ideals were shown to have the persistence property. For instance, {\it edge ideals} of graphs (\cite{MMV}), {\it cover ideals} of perfect graphs, {\it cover ideals} of {\it cliques}, {\it odd holes} and {\it odd antiholes} (\cite{FHVT}), and {\it polymatroidal} ideals (\cite{HRV}). A large class of squarefree monomial ideals with constant depth was constructed in \cite{HV}. %In fact, many experts in the research area had initially believed that persistence property and non-increasing depth hold for {\it most} of squarefree monomial ideals.

In an attempt to tackle the persistence property, at least in identifying a large class of squarefree monomial ideals having the persistence property, the first author, together with Francisco and Van Tuyl in \cite{FHVT}, made a graph-theoretic conjecture about {\it expansion} of {\it critically $s$-chromatic} graphs and proved that this conjecture implies the persistence property for the cover ideals of graphs. The converse {\it a priori} is not known to be true. Recently, Kaiser, Stehl\'ik and \v{S}krekovski \cite{KSS} gave a family of counterexamples to this graph-theoretic conjecture. Computational experiment showed that the first member of their family of graphs also gave a counterexample to the persistence property and non-increasing depth for squarefree monomial ideals. In fact, this is the only graph with at most 20 vertices whose cover ideal fails the persistence property (\cite{Wehlau}). The goal of this paper is to prove that all members of this family indeed give counterexamples to the persistence property. As a consequence, they also provide counterexamples to non-increasing depth property.

Let us now describe more specifically our problem and results. Let $G = (V,E)$ be a {\it simple} graph with vertex set $V = \{x_1, \dots, x_n\}$ and edge set $E$. The \emph{expansion} of $G$ at a vertex $x \in V$ is obtained from $G$ by adding a new vertex $y$, an edge $\{x,y\}$, and edges $\{y,z\}$ whenever $\{x,z\}$ is an edge in $G$. For a subset $W \subseteq V$, the \emph{expansion} of $G$ at $W$, denoted by $G[W]$, is obtained by expanding successively at the vertices in $W$. The first author, together with Francisco and Van Tuyl in \cite{FHVT}, made the following conjecture.

\begin{conjecture} \label{conj.criticalgraphs}
Let $G$ be a critically $s$-chromatic graph. Then there exists a subset $W$ of the vertices such that $G[W]$ is critically $(s+1)$-chromatic.
\end{conjecture}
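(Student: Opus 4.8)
The plan is to recast the expansion operation in coloring-theoretic terms and then to choose $W$ so that the chromatic number increases by exactly one while criticality is preserved. The first step is the observation that expanding $G$ at a vertex $x$ merely adds a true twin of $x$, i.e.\ replaces $x$ by a copy of $K_2$ whose two vertices have the same neighbors outside $\{x\}$; expanding successively at a subset $W$ therefore produces the blow-up $G[W]$ in which each $w \in W$ becomes an edge and every edge incident to $w$ becomes a complete join. A proper coloring of $G[W]$ is then the same as an assignment to each vertex $v$ of a color set $S_v$ with $|S_v| = 2$ for $v \in W$ and $|S_v| = 1$ otherwise, disjoint along edges. Consequently $\chi(G[W])$ equals the least $k$ for which $V$ can be covered by $k$ independent sets of $G$ with every $w \in W$ covered at least twice, and the conjecture becomes: for a critically $s$-chromatic $G$ one can choose $W$ so that this number is exactly $s+1$ and the blow-up $G[W]$ is again critically $(s+1)$-chromatic.

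Producing \emph{some} $W$ with $\chi(G[W]) = s+1$ should be the routine part. Duplicating one vertex raises the chromatic number by at most one (give the new twin a fresh color), so if I add the vertices of a candidate set one at a time the chromatic number climbs in steps of $0$ or $1$ and I may stop the instant it reaches $s+1$. That this value is attainable at all follows because blowing up every vertex strictly increases the chromatic number for the graphs at issue; for instance $\chi(G[V]) = \chi_2(G) \ge 2\chi_f(G)$, which exceeds $s$ for odd holes, odd antiholes, and more generally whenever $\chi_f(G) > s/2$. The entire difficulty is thus concentrated in forcing the resulting blow-up to be critical.

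For criticality I would argue by induction, choosing $W$ as economically as possible, for instance inside an independent set or a minimal set realizing the increase. Deletions split into two cases. Deleting a copy $y$ of a blown-up vertex should leave an expansion of a critically $s$-chromatic proper subgraph of $G$, to which the inductive coloring applies to give an $s$-coloring; deleting an original vertex $v$ should be controlled by the corresponding deletion $G - v$, which is $(s-1)$-colorable by the critical hypothesis, together with a single extra color to accommodate the remaining twins. Making these two reductions land on the same set $W$ is the crux.

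The hard part, and the step I expect to be the genuine obstacle, is reconciling exactness with criticality. Raising the chromatic number to $s+1$ tends to force a set of pairwise adjacent blown-up vertices to span a clique, or some other over-determined configuration, that stays $(s+1)$-chromatic after many single-vertex deletions and so wrecks criticality; conversely, thinning $W$ to protect criticality lets the chromatic number collapse back to $s$. Because the expansion operation only adds vertices, there is no deletion available to repair a blow-up that overshoots, so both requirements must be met by one global choice of $W$, with no room to adjust afterwards. I expect that for sufficiently intricate critical graphs no such choice exists --- precisely the phenomenon exhibited by the family of Kaiser, Stehl\'ik and \v{S}krekovski --- and this is where the intended inductive construction breaks down.
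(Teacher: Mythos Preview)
The statement you are attempting to prove is not a theorem in the paper; it is Conjecture~\ref{conj.criticalgraphs}, and the entire point of the paper is that this conjecture is \emph{false}. The paper recalls that Kaiser, Stehl\'ik and \v{S}krekovski exhibited the graphs $H_q = K_3 \boxtimes P_q$ as explicit counterexamples, and then goes on to show that the cover ideals of all $H_q$ fail the persistence and non-increasing depth properties. There is therefore no ``paper's own proof'' of this statement to compare your proposal against.

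Your write-up is not a proof either, and you seem to be aware of this: the final paragraph openly concedes that the inductive construction breaks down and even cites the Kaiser--Stehl\'ik--\v{S}krekovski family as the reason. Everything before that paragraph is heuristic set-up (reinterpreting expansion as twin-duplication, expressing $\chi(G[W])$ as a fractional-type covering number, proposing to add vertices one at a time); the substantive step --- producing a $W$ for which $G[W]$ is simultaneously $(s+1)$-chromatic \emph{and} critical --- is never carried out, and cannot be, since the statement is false in general. Note also that your claim that $\chi(G[V]) > s$ ``for the graphs at issue'' is itself suspect: for $s \ge 4$ the inequality $2\chi_f(G) > s$ need not hold for a critically $s$-chromatic graph, so even the easy half of your plan has gaps. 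In short, the genuine gap is structural: you are trying to prove a disproved conjecture, and your own closing remarks correctly diagnose why no such argument can succeed.
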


The \emph{cover ideal} of $G = (V,E)$ is defined to be
$$J(G) = \bigcap_{\{x,y\} \in E} (x,y).$$
It was also shown in \cite{FHVT} that if Conjecture \ref{conj.criticalgraphs} holds then the persistence property holds for the cover ideal of any graph. The converse is not known to be true.

\begin{figure}[h!]
\centering
\includegraphics[height=2in]{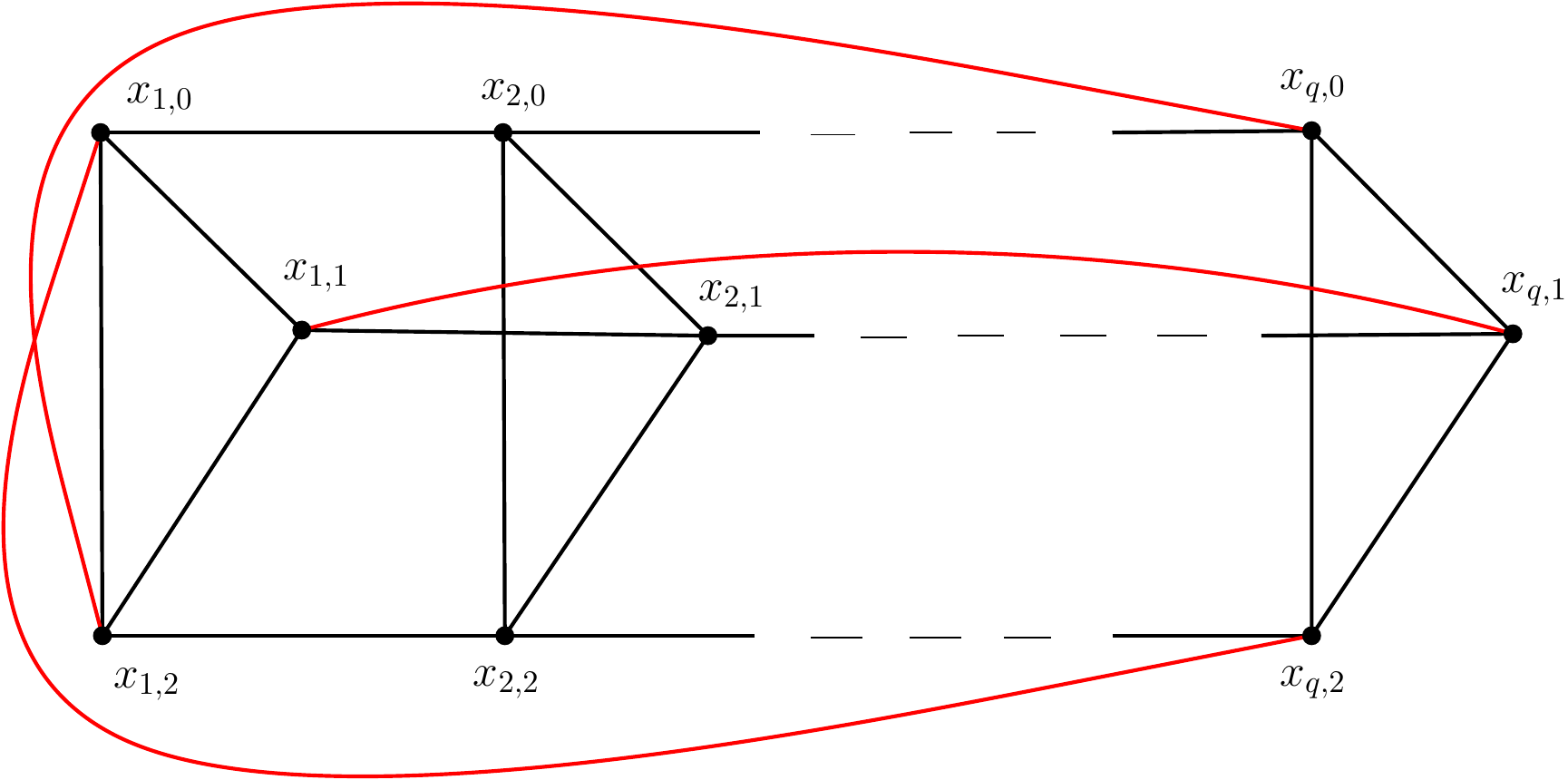}
\caption{The graph $H_q = K_3 \boxtimes P_q$.}\label{fig:graph}
\end{figure}

A family of counterexamples to Conjecture \ref{conj.criticalgraphs} was given by Kaiser, Stehl\'ik and \v{S}krekovski \cite{KSS} as follows. Let $K_3$ denote the complete graph on 3 vertices, and let $P_q$, for $q \ge 4$, denote a {\it path} of length $q-1$. The graph $H_q = K_3 \boxtimes P_q$ is formed by taking $q$ copies of $K_3$ with vertices $\{x_{i,0}, x_{i,1}, x_{i,2}\}, i = 1, \dots, q,$ connecting $x_{i,j}$ and $x_{i+1,j}$ for $i=1, \dots, q-1$ to get 3 paths of length $q-1$, and finally connecting $x_{1,j}$ and $x_{q,2-j}$ for $j=0, 1, 2$ (see Figure \ref{fig:graph}). These graphs were originally constructed by Gallai \cite{Gallai}. One of the interesting properties of $H_q$s is that they embed in the Klein bottle as quadrangulations (see \cite{KSS}).

It was pointed out in \cite{KSS} that when $q = 4$, the cover ideal $J = J(H_4)$ fails the persistence property and non-increasing depth. In particular, if $\mf$ is the maximal homogeneous ideal then $\mf \in \Ass(R/J^3)$ but $\mf \not\in \Ass(R/J^4)$. The main result of this paper is to show that this phenomenon occurs for all $q \ge 4$.

\begin{theorem} \label{thm.intro}
Let $H_q$ be the graph constructed as before. Let $J = J(H_q)$ and let $\mf$ be the maximal homogeneous ideal in the polynomial ring $R = k[x_{i,j} ~|~ i=1, \dots, q, j=0,1,2]$. Then $\mf \in \Ass(R/J^3)$ and $\mf \not\in \Ass(R/J^4)$. As a consequence, $J$ fails to have non-increasing depth.
\end{theorem}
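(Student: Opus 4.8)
The plan is to work with the combinatorial/polyhedral description of associated primes of powers of cover ideals. Recall that $J(H_q)^s = \bigcap_{\{x,y\}\in E}(x,y)^s$, and that $\mf \in \Ass(R/J^s)$ if and only if the symbolic and ordinary powers differ in a way detected at the irrelevant ideal; more usefully, by a result of Francisco--H\`a--Van Tuyl, $\mf \in \Ass(R/J(G)^s)$ precisely when there is no ``fractional'' certificate, i.e.\ when the linear program associated to covering the edge inequalities $s$ times has an integral obstruction. Concretely, I would use the criterion: $\mf \notin \Ass(R/J(G)^s)$ iff for every vertex $x$ one can write a suitable decomposition of an optimal cover, equivalently iff the degree-$s$ piece corresponding to $\mf$ does not appear as an associated prime of the polarization. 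In practice the cleanest route is to use the characterization via \emph{covers}: $\mf\in\Ass(R/J(H_q)^s)$ iff $s$ equals the optimal value of an integer program that is strictly larger than its LP relaxation bound, and to phrase everything in terms of assignments of nonnegative integers to the vertices of $H_q$.

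First I would set up the LP/IP framework explicitly for $H_q$: to each vertex $x_{i,j}$ assign a value $a_{i,j}\in\ZZ_{\ge 0}$, and record the constraints coming from edges (the three ``horizontal'' paths, the triangle edges in each copy of $K_3$, and the twisted identification $x_{1,j}\sim x_{q,2-j}$). The statement $\mf\in\Ass(R/J^3)$ amounts to exhibiting a monomial $\x^{\mba}$ with $(J^3 : \x^{\mba}) = \mf$: I would produce an explicit vector $\mba$ of exponents — built periodically along the three paths, using the $K_3\boxtimes P_q$ structure — verify that $\x^{\mba}\notin J^3$ but $x_{i,j}\cdot\x^{\mba}\in J^3$ for every generator $x_{i,j}$, which is a finite check reducing to counting along paths and around the Klein-bottle identification. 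The number $3$ is forced here by the fact that $H_q$ has chromatic number $3$ (odd-girth/quadrangulation considerations from \cite{KSS}), so $J^3$ is the first power where the irrelevant ideal can appear.

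The harder half is $\mf\notin\Ass(R/J^4)$, and this is where I expect the main obstacle. Here I must show that for \emph{every} monomial $\x^{\mba}$ with $\x^{\mba}\notin J^4$, there exists some generator $x_{i,j}$ with $x_{i,j}\x^{\mba}\notin J^4$ either — equivalently, that the relevant IP at level $4$ has value equal to its LP bound, so that a fractional cover can always be rounded. The strategy is a structural/parity argument: given any integer assignment witnessing ``$\x^{\mba}\notin J^4$'' (i.e.\ some edge is covered at most $3$ times), I would trace the deficiency along the graph. Because $H_q$ is built from three synchronized paths of $K_3$'s with a single orientation-reversing twist, a local deficiency at one copy of $K_3$ propagates; the twist $x_{1,j}\leftrightarrow x_{q,2-j}$ is exactly what obstructs closing up the deficiency into a consistent global pattern at level $4$ (whereas at level $3$ the twist is compatible, by the same mechanism that makes $\chi(H_q)=3$). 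I would make this precise by a pigeonhole/averaging argument over the $q$ copies of $K_3$: summing the edge constraints around each triangle and along each path gives an inequality that, combined with the twist, forces the existence of a ``slack'' vertex whose removal still leaves the monomial outside $J^4$. The delicate point — and the real content of the proof — is handling the parity interaction between $s=4$ and the twisted gluing uniformly in $q$; I would expect to case on the residue of certain partial sums of the $a_{i,j}$ along a path modulo small integers, and to invoke the graph-coloring interpretation of \cite{KSS} (that $H_q[W]$ is never critically $4$-chromatic) to rule out the configurations that would otherwise put $\mf$ into $\Ass(R/J^4)$. The non-increasing depth consequence is then immediate: since $\depth(R/J^3)=0<\depth(R/J^4)$, the depth function is not non-increasing.
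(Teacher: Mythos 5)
Your proposal has two genuine problems. First, the justification you give for $\mf \in \Ass(R/J^3)$ rests on the claim that $\chi(H_q) = 3$; in fact $\chi(H_q) = 4$, and the relevant input from \cite{KSS} is that $H_q$ is \emph{critically $4$-chromatic}. The mechanism that actually works (and that the paper uses) is the result of \cite{FHVT1} that for a critically $d$-chromatic graph one has $\mf \in \Ass(R/J(G)^{d-1})$; with your value $\chi = 3$ that same mechanism would place $\mf$ in $\Ass(R/J^2)$, which is false. So while the conclusion of the easy half is correct, your stated reason for it is not.

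Second, and more importantly, the half carrying all the content --- $\mf \notin \Ass(R/J^4)$ --- is only gestured at, and the reformulation you propose is not right. The statement ``the IP at level $4$ equals its LP bound, so a fractional cover can be rounded'' is not equivalent to $\mf \notin \Ass(R/J^4)$: what must be shown is that no monomial $T \notin J^4$ satisfies $x_{i,j}T \in J^4$ for every variable, and this is not an LP-integrality assertion. The concrete steps that make a propagation argument work are all missing from your sketch: (i) any witness $T$ must divide $\big(\prod_{i,j} x_{i,j}\big)^3$; (ii) $\big(\prod_{i,j} x_{i,j}\big)^3$ itself lies in $J^4$, which requires exhibiting an explicit factorization into four vertex covers (with different constructions for $q$ odd and $q$ even); (iii) each triangle must contribute total degree at least $8$ to $T$, because each of the four vertex covers must contain at least two vertices of every triangle; and (iv) the propagation proper: writing $Tx_{q,0}$ as a product of four vertex covers forces, triangle by triangle, an essentially unique pattern for how each degree-$8$ triangle is split among the four covers, and the twisted identification of $x_{1,j}$ with $x_{q,2-j}$ then forces, in each of finitely many terminal cases, either $T \in J^4$ or $x_{q,j}T \notin J^4$ for some $j$ --- a contradiction either way. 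Your appeal to parity of partial sums and to the coloring results of \cite{KSS} cannot substitute for this: the failure of Conjecture \ref{conj.criticalgraphs} for $H_q$ is a statement about expansions being non-critical, and there is no known implication from that coloring fact to $\mf \notin \Ass(R/J^4)$ (the implication in \cite{FHVT} runs in the opposite direction). The final deduction that depth fails to be non-increasing is fine once the two membership statements are in place.
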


%%%%%%%%%%%%%%%%%%%%%%%%%%%%%%%%%%%%%%%%%%%%%%%%

\section{Preliminaries} \label{sec.prel}

In this section, we collect notation and terminology used in the paper. We follow standard texts in the research area \cite{Berge, BrHe, HHbook, MS}.

Let $k$ be a field, let $R = k[x_1, \dots, x_n]$, and let $\mf = (x_1, \dots, x_n)$. Throughout the paper, we shall identify the variables of $R$ with distinct vertices $V = \{x_1, \dots, x_n\}$. A graph $G = (V,E)$ consists of $V$ and a set $E$ of edges connecting pairs of vertices. We shall restrict our attention to \emph{simple} graphs, i.e., graphs without loops nor multiple edges.

\begin{definition} Let $G$ be a simple graph.
\begin{enumerate}
\item The \emph{chromatic number} of a graph $G$, denoted by $\chi(G)$, is the least number of colors to assign to the vertices so that adjacent vertices get different colors.
\item The graph $G$ is said to be \emph{critically $s$-chromatic} if $\chi(G) = s$, and for any vertex $x$ in $G$, $\chi(G \backslash x) < s$.
\end{enumerate}
\end{definition}
Critically $s$-chromatic graphs are also known as \emph{$s$-vertex-critical} graphs. We choose to use the terminology of critically $s$-chromatic graphs to be consistent with \cite{FHVT}, where Conjecture \ref{conj.criticalgraphs} was stated.

A collection of vertices $W \subseteq V$ in $G = (V,E)$ is called a \emph{vertex cover} if for any edge $e \in E$, $W \cap e \not= \emptyset$. A vertex cover is called \emph{minimal} if no proper subset of it is also a vertex cover.

There are various ways to associate squarefree monomial ideals to simple graphs. In this paper, the correspondence that we explore is the cover ideal construction.

\begin{definition} Let $G = (V,E)$ be a simple graph. The \emph{cover ideal} of $G$ is defined to be
$$J(G) = \bigcap_{\{x,y\} \in E} (x,y).$$
\end{definition}
The term \emph{cover} ideal comes from the fact that minimal generators of $J(G)$ correspond to minimal vertex covers in $G$.
This cover ideal construction gives a one-to-one correspondence between simple graphs and unmixed, codimension two squarefree monomial ideals (this construction extends to hypergraphs to give a one-to-one correspondence to all squarefree monomial ideals).

\begin{definition} Let $I \subseteq R$ be an ideal. A prime ideal $P$ is said to be an \emph{associated prime} of $I$ if there exists an element $c \in R$ such that $P = I : c$. The set of all associated primes of $I$ is denoted by $\Ass(R/I)$.
\end{definition}

\begin{definition} Let $M$ be a finitely generated $R$-module.
\begin{enumerate}
\item A sequence of elements $x_1, \dots, x_t \in R$ is called an $M$-regular sequence if $M \not= (x_1, \dots, x_t)M$ and $x_i$ is not a zero-divisor of $M/(x_1, \dots, x_{i-1})M$ for all $i = 1, \dots, t$.
\item The \emph{depth} of $M$, denoted by $\depth(M)$, is the largest length of an $M$-regular sequence in $R$.
\end{enumerate}
\end{definition}

\begin{remark} \label{rmk.depth0}
It is an easy exercise to see that for an ideal $I \subseteq R$, $\depth (R/I) > 0$ if and only if $\mf \not\in \Ass(R/I)$.
\end{remark}

\begin{remark} \label{rmk.Hpq}
The construction of the graph $H_q$ can be generalized to a pair consisting of a path and a complete graph of any size. Indeed, let $P_q$ be a path of length $q-1$ and let $K_p$ be the complete graph of size $p$. We can then construction the graph $H_{p,q} = K_p \boxtimes P_q$ by taking $q$ copies of $K_p$ with vertices $\{x_{i,0}, \dots, x_{i,p-1}\}$, $i = 1, \dots, q$, connecting $x_{i,j}$ to $x_{i+1,j}$ for $i=1, \dots, q-1$ to get $p$ paths of length $q-1$, and finally connecting $x_{1,j}$ to $x_{q,p-1-j}$ for $j = 0, \dots, p-1$. In this construction, $H_q = H_{3,q}$.
\end{remark}

%%%%%%%%%%%%%%%%%%%%%%%%%%%%%%%%%%%%%%%%%%%%%%%%

\section{Proof of the main result} \label{sec.proof}

This section is devoted to the proof of our main result, Theorem \ref{thm.intro}. This theorem will be proved as a combination of Propositions \ref{prop.power3} and \ref{prop.power4} and Corollary \ref{cor.depth}. For simplicity of terminology, we call the complete graph $K_3$ on $\{x_{i,0}, x_{i,1}, x_{i,2}\}$ the $i$th triangle in $H_q$. We shall also abuse notation in identifying vertices of $H_q$ and corresponding variables in $R$.

\begin{proposition} \label{prop.power3}
Let $H_q$ be the graph constructed as in the introduction. Let $J = J(H_q)$ and let $\mf$ be the maximal homogeneous ideal in $R = k[x_{i,j} ~|~ i = 1, \dots, q, j = 0,1,2]$. Then $\mf \in \Ass(R/J^3)$.
\end{proposition}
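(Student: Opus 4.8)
The plan is to show $\mf \in \Ass(R/J^3)$ by exhibiting an explicit monomial $c \in R$ with $J^3 : c = \mf$. Equivalently, by Remark~\ref{rmk.depth0}, it suffices to produce a witness showing $\depth(R/J^3) = 0$. The natural candidate for $c$ is built from the combinatorial structure of $H_q$: since minimal generators of $J = J(H_q)$ correspond to minimal vertex covers of $H_q$, powers $J^3$ are generated by monomials of the form $x^{\mba}$ where $\mba$ is a sum of (characteristic vectors of) three vertex covers. The key point is that $\mf \in \Ass(R/J^3)$ precisely when there is a monomial $c = \prod x_{i,j}^{b_{i,j}}$ such that $c \notin J^3$ but $c \cdot x_{i,j} \in J^3$ for \emph{every} variable $x_{i,j}$. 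I would write down such a $c$ by taking, for each of the three ``strands'' $j = 0,1,2$, an appropriate distribution of exponents along the path $x_{1,j}, x_{2,j}, \dots, x_{q,j}$, chosen so that the total degree along each triangle and the ``twist'' connecting triangle $1$ to triangle $q$ (the edges $x_{1,j} \sim x_{q,2-j}$) are balanced.

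The main technical tool will be the dual/LP characterization of membership in powers of cover ideals: a monomial $x^{\mba}$ lies in $J(G)^s$ if and only if for every \emph{independent set} (equivalently, every edge, iterated) the appropriate inequality holds — more precisely, $x^{\mba} \in J(G)^s$ iff $\sum_{x \in e} a_x \ge s$ cannot be the right phrasing directly; rather one uses that $J(G)^{(s)} \supseteq J(G)^s$ with equality for the classes where symbolic and ordinary powers agree, and that $x^{\mba} \in J(G)^{(s)}$ iff $\sum_{x \notin W} a_x \ge s$ for every independent set... The cleanest route, which I would adopt, is: (1) recall (from \cite{FHVT} or the cover-ideal literature) that for a graph $G$, $x^{\mba} \in J(G)^s$ iff $\mba$ can be written as a nonnegative integer combination $\sum \lambda_W \chi_{V \setminus W}$ over minimal vertex covers... no — iff $\mba$ dominates a sum of $s$ vertex covers; (2) phrase non-membership of $c$ as the existence of an independent set (a triangle is a clique, its complement structure matters) violating a counting bound; (3) phrase membership of each $c\cdot x_{i,j}$ by explicitly decomposing $\mba + e_{i,j}$ as a sum of three vertex covers of $H_q$.

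Concretely, the steps in order: first, fix the exponent vector $\mba$ — I expect something like putting exponent $1$ on roughly half the vertices of each path-strand in an alternating pattern, adjusted near the ends to account for the cross-edges $x_{1,j}\sim x_{q,2-j}$; the parity of $q$ may force two slightly different patterns (even vs.\ odd $q$), which is a minor bookkeeping nuisance. Second, verify $c \notin J^3$ by finding the obstructing independent set / edge-cover deficiency: since $\deg c$ should be just barely too small, one exhibits one ``bad'' edge $e$ with $a_{x} + a_{y} \le 2$ preventing a decomposition into three covers (a sum of three covers must put total weight $\ge 3$ on every edge). Third — and this is the heart of the argument — for each variable $x_{i,j}$, explicitly write $\mba + e_{i,j} = w^{(1)} + w^{(2)} + w^{(3)}$ with each $w^{(k)}$ the characteristic vector of a vertex cover of $H_q$; the three covers will depend on which vertex was bumped, and one must check the covering condition on all three types of edges (triangle edges, path edges, and the two twisted end-edges). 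The main obstacle I anticipate is precisely this last step done uniformly in $q$: one needs a clean, closed-form family of vertex covers parametrized by $(i,j)$ and the parity of $q$, together with a short verification that each is genuinely a cover — the twisted edges joining the first and last triangles are what make naive periodic patterns fail, so the construction must ``wrap around'' correctly, and getting this to work for all $q \ge 4$ at once (rather than checking small cases) is where the real care is required.
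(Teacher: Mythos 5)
There is a genuine gap: your proposal is a plan, not a proof. Every substantive step is deferred — you never write down the witness monomial $c$, never verify $c \notin J^3$, and never carry out the decomposition of $c \cdot x_{i,j}$ into three vertex covers, which you yourself identify as ``the heart of the argument.'' Moreover, the candidate exponent vector you sketch (exponent $1$ on roughly half the vertices of each strand) cannot work quantitatively: every vertex cover of $H_q$ must contain at least two vertices of each of the $q$ triangles, so every minimal generator of $J^3$ has degree at least $6q$, while your candidate has degree roughly $\tfrac{3q}{2}$; such a $c$ would satisfy $c \cdot x_{i,j} \notin J^3$ for trivial degree reasons, so $J^3 : c \neq \mf$.

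The paper's proof is a two-line citation that sidesteps all of this: by \cite[Proposition 9]{KSS} the graph $H_q$ is critically $4$-chromatic, and by \cite[Corollary 4.5]{FHVT1} the maximal ideal lies in $\Ass(R/J(G)^{s})$ whenever $G$ is critically $(s+1)$-chromatic. The machinery behind that corollary is exactly the coloring criterion you were groping toward: $\big(\prod_{x \in V} x\big)^{k-1} \in J(G)^k$ if and only if $G$ is $k$-colorable, so for a critically $4$-chromatic graph the monomial $T = \big(\prod_{i,j} x_{i,j}\big)^{2}$ lies outside $J^3$ (since $\chi(H_q)=4$) while $T x_{i,j} \in J^3$ for every vertex (since $H_q \setminus x_{i,j}$ is $3$-colorable). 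If you want a self-contained direct argument, that is the witness to use; as written, your proposal supplies neither the witness nor the verification.
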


\begin{proof} It was shown in \cite[Proposition 9]{KSS} that $H_q$ is critically 4-chromatic. Thus, it follows from \cite[Corollary 4.5]{FHVT1} that $\mf \in \Ass(R/J^3)$.
\end{proof}

\begin{proposition} \label{prop.power4}
Let $H_q$ be the graph constructed as in the introduction. Let $J = J(H_q)$ and let $\mf$ be the maximal homogeneous ideal in $R = k[x_{i,j} ~|~ i = 1, \dots, q, j = 0,1,2]$. Then $\mf \not\in \Ass(R/J^4)$.
\end{proposition}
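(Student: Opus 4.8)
The plan is to show directly that $\mf \notin \Ass(R/J^4)$, or equivalently (by Remark \ref{rmk.depth0}) that $\depth(R/J^4) > 0$, by exhibiting a single element of $R$ that is a nonzerodivisor on $R/J^4$. The standard tool here is the combinatorial description of powers of the cover ideal: since $J = J(H_q) = \bigcap_{\{x,y\}\in E}(x,y)$, a monomial $\mathbf{x}^{\mathbf{a}} = \prod x_{i,j}^{a_{i,j}}$ lies in $J^s$ if and only if $a_x + a_y \ge s$ for every edge $\{x,y\}$ fails to capture it exactly for $s\ge 2$; rather, one uses that $J^{(s)} = \bigcap (x,y)^s$ is the symbolic power, and $J^s \subseteq J^{(s)}$ with the containment governed by the LP-description. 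The clean criterion I would use is the one behind \cite[Corollary 4.5]{FHVT1}: $\mf \in \Ass(R/J^s)$ iff a certain associated "coloring/fractional chromatic" obstruction exists; for the non-membership direction I would instead find an explicit monomial $m$ (a product of variables, e.g. one variable from each triangle, or a carefully chosen $\mathbf{x}^{\mathbf{b}}$) such that $(J^4 : m) = J^4 \cap (\text{something})$ — more precisely, so that $(J^4:m) \ne \mf$.

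Concretely, the key step is this: $\mf \in \Ass(R/J^4)$ would mean there is a monomial $u \notin J^4$ with $x_{i,j}\cdot u \in J^4$ for all $i,j$. Using the primary decomposition $J^4 \subseteq J^{(4)} = \bigcap_{\{x,y\}\in E}(x,y)^4$, and more importantly a precise description of the minimal monomial generators / the integral closure, one translates the condition "$x_{i,j} u \in J^4$ for all generators $x_{i,j}$ of $\mf$" into a system of inequalities on the exponent vector $\mathbf{a}$ of $u$ together with the constraint that no single decrement of $\mathbf{a}$ keeps it in $J^4$. I would set up the polytope/transportation description: $\mathbf{x}^{\mathbf{a}} \in J^s$ iff $\mathbf{a}$ can be written as a sum of $s$ vertex-cover vectors (indicator vectors of vertex covers of $H_q$). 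So $u$ witnesses $\mf \in \Ass(R/J^4)$ iff $\mathbf{a}$ is NOT a sum of $4$ cover-vectors but $\mathbf{a}+\mathbf{e}_{i,j}$ IS, for every vertex $x_{i,j}$. The task is to prove no such $\mathbf{a}$ exists for $H_q$.

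The heart of the argument — and the main obstacle — is the combinatorial/LP feasibility analysis on the specific graph $H_q = K_3 \boxtimes P_q$. I expect to argue as follows: suppose such an $\mathbf{a}$ exists. By homogeneity/symmetry considerations and an averaging or exchange argument over the triangle structure, reduce to understanding how "fractional" covers of $H_q$ of total weight close to $4$ distribute across the $q$ triangles. Each triangle $K_3$ forces the three local coordinates to satisfy the $K_3$ cover constraints; the path edges $\{x_{i,j},x_{i+1,j}\}$ and the twist edges $\{x_{1,j},x_{q,2-j}\}$ couple consecutive triangles. One shows that the obstruction to writing $\mathbf{a}$ as $4$ integral covers, when $\mathbf{a}+\mathbf{e}_{i,j}$ is always writable, would force a "parity" or "rounding" defect that the Klein-bottle/quadrangulation structure of $H_q$ does not permit for $s=4$ (whereas it does for $s=3$, which is why Proposition \ref{prop.power3} holds). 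I anticipate needing an explicit construction: given any candidate $\mathbf{a}$, explicitly build four vertex covers of $H_q$ summing to $\mathbf{a}$, using the periodicity along the path and handling the boundary twist by a case analysis on the residues of the $x_{1,j}$ and $x_{q,j}$ coordinates. That explicit decomposition is the routine-but-lengthy part; verifying it covers the boundary twist correctly for all $q\ge 4$ is where the real care is required.

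Alternatively, if the direct polytope argument proves unwieldy, a cleaner route is to localize: it suffices to show $\depth(R/J^4)>0$, and one can try to produce a nonzerodivisor of the form $z = \prod_{\text{selected vertices}} x_{i,j}$ by checking that $z \notin P$ for every $P \in \Ass(R/J^4)$; since $\Ass(R/J^4) \subseteq \Ass(R/J^{(4)})$-adjacent primes are generated by variables corresponding to "non-covers" obstructions, and one can enumerate the relevant primes combinatorially (they correspond to certain connected induced subgraphs of $H_q$ by results of \cite{FHVT1, HQ}), this reduces to a finite, structured check. Either way, the crux is the combinatorial claim that for $s=4$ the graph $H_q$ admits no "almost-cover" exponent vector witnessing $\mf \in \Ass$, and I would expect the author's proof to carry out precisely this explicit cover-decomposition argument, organized around the $q$ triangles and the boundary twist.
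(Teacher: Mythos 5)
Your high-level framing is the right one and matches the paper's: assume a witness monomial $T\notin J^4$ with $J^4:T=\mf$, translate membership in $J^4$ into divisibility by a product of four vertex covers of $H_q$, propagate constraints triangle by triangle along the path, and derive a contradiction at the boundary twist. But the proposal stops at the level of a plan precisely where the proof actually lives, and the steps you defer as ``routine-but-lengthy'' are the ones that require specific ideas you have not supplied. Concretely: (a) you never exploit the bound that $T$ divides $\bigl(\prod_{i,j} x_{i,j}\bigr)^3$ (because generators of $J^4$ have exponents at most $4$) together with the fact that every vertex cover must contain at least two vertices of each triangle, which forces the total exponent of each triangle in $T$ to be $8$ or $9$ with at most one vertex of exponent $2$; without this the ``LP feasibility analysis'' has no finite case structure to analyze. (b) You never identify the engine of the propagation: for a triangle of exponent profile $(2,3,3)$ there is, up to permuting the four covers, a \emph{unique} way to distribute its variables into four covers, and this uniqueness is what rigidly transmits conditions from the $i$th triangle to the $(i+1)$st (with a controlled branching when the profile is $(3,3,3)$). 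Your ``averaging or exchange argument'' and ``parity/rounding defect from the Klein-bottle structure'' are placeholders for this mechanism, not arguments.

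(c) Most importantly, your stated endgame --- ``given any candidate $\mathbf{a}$, explicitly build four vertex covers of $H_q$ summing to $\mathbf{a}$'' --- is not the correct dichotomy and would be false as stated: one cannot always show $T\in J^4$. The actual contradiction is two-pronged: after propagating to the $q$th triangle, the conditions imposed by the twist edges $\{x_{1,j},x_{q,2-j}\}$ either can be met by the exponents available in $T$ (whence $T\in J^4$, contradiction) or force the product $M_1M_2M_3M_4$ to contain some vertex $x_{q,j}$ with multiplicity exceeding what $T$ times a \emph{single} other variable can supply, whence $Tx_{q,j'}\notin J^4$ for a suitable $j'$ and so $J^4:T\neq\mf$ (contradiction with the ``for all vertices'' quantifier you correctly wrote down but then dropped). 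Your alternative route via enumerating $\Ass(R/J^4)$ through localization is not developed at all and there is no available combinatorial classification of those primes for $H_q$ to make it a ``finite, structured check.'' As it stands the proposal identifies the correct strategy but contains a genuine gap: the entire quantitative case analysis that constitutes the proof.
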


\begin{proof} Suppose, by contradiction, that $\mf \in \Ass(R/J^4)$. That is, there exists a monomial $T$ in $R$ such that $T \not\in J^4$ and $J^4 : T = \mf$. Since the generators of $J$ are squarefree, the powers of each variable in minimal generators of $J^4$ are at most 4. This implies that the power of each variable in $T$ is at most 3, i.e., $T$ divides $\big(\prod_{i,j} x_{i,j}\big)^3$. We shall now make a few observations to reduce the number of cases that we need to consider later.

\begin{obs} \label{obs.power3}
$M = \big(\prod_{i,j} x_{i,j}\big)^3 \in J^4$. Indeed, we can write $M = M_1 M_2 M_3 M_4 N$ as follows.
\begin{enumerate}
\item If $q$ is odd then choose $N = \prod_{i=1}^q x_{i,0}$ and
\begin{align*}
M_1 & = \big(\prod_{i < q \text{ odd}} x_{i,0} x_{i,1}\big) \big(\prod_{i \text{ even}} x_{i,1} x_{i,2}\big) (x_{q,0} x_{q,2}) \\
M_2 & = \big(\prod_{i < q \text{ odd}} x_{i,0} x_{i,2}\big) \big(\prod_{i \text{ even}} x_{i,1} x_{i,2}\big) (x_{q,0} x_{q,1}) \\
M_3 & = \big(\prod_{i < q \text{ odd}} x_{i,1} x_{i,2}\big) \big(\prod_{i \text{ even}} x_{i,0} x_{i,1}\big) (x_{q,1} x_{q,2}) \\
M_4 & = \big(\prod_{i < q \text{ odd}} x_{i,1} x_{i,2}\big) \big(\prod_{i \text{ even}} x_{i,0} x_{i,2}\big) (x_{q,1} x_{q,2}).
\end{align*}
\item If $q$ is even then choose $N = \prod_{i=1}^q x_{i,1}$ and
\begin{align*}
M_1 & = \big(\prod_{i \text{ odd}} x_{i,0} x_{i,2}\big) \big(\prod_{i \text{ even}} x_{i,0} x_{i,1}\big) \\
M_2 & = \big(\prod_{i \text{ odd}} x_{i,0} x_{i,2}\big) \big(\prod_{i \text{ even}} x_{i,1} x_{i,2}\big) \\
M_3 & = \big(\prod_{i \text{ odd}} x_{i,0} x_{i,1}\big) \big(\prod_{i \text{ even}} x_{i,0} x_{i,2}\big) \\
M_4 & = \big(\prod_{i \text{ odd}} x_{i,1} x_{i,2}\big) \big(\prod_{i \text{ even}} x_{i,0} x_{i,2}\big).
\end{align*}
\end{enumerate}
It is an easy exercise to verify that $M_1, \dots, M_4$ are vertex covers of $H_q$. Thus, $M \in J^4$. This observation allows us to assume that $T$ strictly divides $M$.
\end{obs}

\begin{obs} \label{obs.totalpower}
For each $i = 1, \dots, q$, the total power of $x_{i,0}, x_{i,1}$ and $x_{i,2}$ in $T$ is at least 8. Indeed, take $k \not= i$, then since $J^4 : T = \mf$, we must have $Tx_{k,0} \in J^4$. It then follows from the fact that generators of $J$ correspond to vertex covers of $H_q$ that $Tx_{k,0}$ can be written as the product of 4 vertex covers of $H_q$. Notice also that to cover the triangle with vertices $\{x_{i,0}, x_{i,1}, x_{i,2}\}$ each vertex cover needs at least two of those 3 vertices. Thus, 4 vertex covers contain in total at least 8 copies of those vertices. This observation and the fact that $T$ divides $M$ allow us to conclude that for each $i = 1, \dots, q$, either all three vertices $\{x_{i,0}, x_{i,1}, x_{i,2}\}$ appear in $T$ each with power exactly 3, or two of them appear in $T$ with power 3 and the third one appears in $T$ with power exactly 2. For simplicity of language, we shall call the total power of $\{x_{i,0}, x_{i,1}, x_{i,2}\}$ in $T$ the {\it power of the $i$th triangle in $T$}.
\end{obs}

\begin{obs} \label{obs.easy}
Suppose that the power of the $i$th triangle in $T$ is at least 8, and we already impose the conditions that 3 among the $M_i$s each has to contain a specific (but distinct) variable in the $i$th triangle. Then we can always distribute the remaining variables of the $i$th triangle from $T$ into the $M_i$s so that each of them indeed covers the edges of the $i$th triangle. To see this, without loss of generality, we may assume that the 3 imposed conditions are $x_{i,0} ~\big|~ M_1, x_{i,1} ~\big|~ M_2$ and $x_{i,2} ~\big|~ M_3$, and assume that $x_{i,1}$ and $x_{i,2}$ appear in $T$ with powers at least 3. This implies that $x_{i,0}$ appears in $T$ with power at least 2, and we can distribute the variables of the $i$th triangle in $T$ into the $M_i$s as follows:
\begin{align*}
x_{i,0} x_{i,1} & \Big|~ M_1 \\
x_{i,1} x_{i,2} & \Big|~ M_2 \\
x_{i,1} x_{i,2} & \Big|~ M_3 \\
x_{i,0} x_{i,2} & \Big|~ M_4.
\end{align*}
\end{obs}

\begin{obs} \label{obs.firstTA}
Re-indexing the vertices of $H_q$ as follows: label $x_{q,0}$ by $x_{1,2}$, label $x_{q,1}$ by $x_{1,1}$, label $x_{q,2}$ by $x_{1,0}$ (notice that we have switched the second indices 0 and 2 in the $q$ triangle and bring it to be the first triangle), and then label $x_{i,j}$ by $x_{i+1,j}$ for all $1 \le i \le q-1$ and $j = 0,1,2$ (i.e., shifting the rest of the triangles one place to the right). We then obtain an isomorphic copy of $H_q$ where the old $q$th triangle becomes the first one. This process can be repeated. Thus, coupled with Observation \ref{obs.power3}, we can assume that the power of the first triangle in $T$ is exactly 8. Without loss of generality, we may further assume that $x_{1,0}$ appears in $T$ with power 2, while $x_{1,1}$ and $x_{1,2}$ appear in $T$ with powers 3.
\end{obs}

\begin{obs} \label{obs.distribution}
Fix an index $i < q-1$ where the power of the $i$th triangle in $T$ is exactly 8, and assume that $x_{i,0}$ appears in $T$ with power 2 (and so, $x_{i,1}$ and $x_{i,2}$ appear in $T$ both with power 3). Since $J^4 : T = \mf$, in particular, we have $Tx_{q,0} \in J^4$. That is, we can write $Tx_{q,0} = M_1 M_2 M_3 M_4$ as the product of 4 elements in $J$, i.e., 4 vertex covers of $H_q$. To distribute $x_{i,0}^2x_{i,1}^3x_{i,2}^3$ into 4 vertex covers, there is only one possibility (up to permutation of the indices of the vertex covers), which is:
\begin{align*}
x_{i,0} x_{i,1} & \Big|~ M_1 \\
x_{i,0} x_{i,2} & \Big|~ M_2 \\
x_{i,1} x_{i,2} & \Big|~ M_3 \\
x_{i,1} x_{i,2} & \Big|~ M_4.
\end{align*}
This distribution of the vertices of the $i$th triangle will impose specific conditions on how the vertices of the $(i+1)$st triangle can be distributed into the 4 vertex covers. Particularly, we must have that $x_{i+1,2} ~\big|~ M_1, x_{i+1,1} ~\big|~ M_2$, and $x_{i+1,0}$ divides both $M_3$ and $M_4$.

If the power of the $(i+1)$st triangle in $T$ is 9 then we can distribute vertices in the $(i+1)$st triangle into the $M_i$s as follows:
\begin{align*}
x_{i,0} x_{i,1} \quad x_{i+1,1} x_{i+1,2} & \Big|~ M_1 \\
x_{i,0} x_{i,2} \quad x_{i+1,1} x_{i+1,2} & \Big|~ M_2 \\
x_{i,1} x_{i,2} \quad x_{i+1,0} x_{i+1,1} & \Big|~ M_3 \\
x_{i,1} x_{i,2} \quad x_{i+1,0} x_{i+1,2} & \Big|~ M_4,
\end{align*}
where the extra copy of $x_{i+1,0}$ could be assigned to either $M_1$ or $M_2$. Now, the only conditions imposed on the $(i+2)$nd triangle are $x_{i+2,2} ~\big|~ M_3, x_{i+2,1} ~\big|~ M_4$, and either $x_{i+2,0} ~\big|~ M_2$ or $x_{i+2,0} ~\big|~ M_1$. It follows from Observation \ref{obs.easy} that the variables of the $(i+2)$nd triangle in $T$ can be distributed into the $M_i$s, and we can think of the $(i+2)$nd triangle as our new starting point (if $i+2 < q$).

%In the particular case, where $i+2 = q$, the first triangle also impose the following conditions on how the vertices of the $q$th triangle are distributed: $x_{q,0} ~\big|~ M_1, x_{q,1} ~\big|~ M_2, x_{q,2} ~\big|~ M_3$ and $x_{q,2} ~\big|~ M_4$. In this case, we can choose to add the extra copy of $x_{i+1,0}$ to $M_2$. Thus, the conditions imposed on the $q$ triangle are:
%\begin{align*}
%x_{q,0} & \Big|~ M_1 \\
%x_{q,1} & \Big|~ M_2 \\
%x_{q,2} & \Big|~ M_3 \\
%x_{q,1} x_{q,2} & \Big|~ M_4.
%\end{align*}
%It is easy to see now that we can always distribute (at least) 8 vertices of the $q$ triangle in $T$ into the $M_i$s so that each of these do cover the $q$th triangle, and we have 4 vertex covers in $T$.

If, on the other hand, the power of the $(i+1)$st triangle in $T$ is 8, then we obtain the following possibilities depending on which variable in the $(i+1)$st triangle appears in $T$ with power 2.

\begin{enumerate}
\item If the power of $x_{i+1,0}$ in $T$ is 2 then (up to permuting $M_3$ and $M_4$) we have:
\begin{align*}
x_{i,0} x_{i,1} \quad x_{i+1,1} x_{i+1,2} & \Big|~ M_1 \\
x_{i,0} x_{i,2} \quad x_{i+1,1} x_{i+1,2} & \Big|~ M_2 \\
x_{i,1} x_{i,2} \quad x_{i+1,0} x_{i+1,1} & \Big|~ M_3 \\
x_{i,1} x_{i,2} \quad x_{i+1,0} x_{i+1,2} & \Big|~ M_4.
\end{align*}
\item If the power of $x_{i+1,1}$ in $T$ is 2 then we must be in either of the following cases:
\begin{align*}
x_{i,0} x_{i,1} \quad x_{i+1,0} x_{i+1,2} & \Big|~ M_1 \\
x_{i,0} x_{i,2} \quad x_{i+1,1} x_{i+1,2} & \Big|~ M_2 \\
x_{i,1} x_{i,2} \quad x_{i+1,0} x_{i+1,1} & \Big|~ M_3 \\
x_{i,1} x_{i,2} \quad x_{i+1,0} x_{i+1,2} & \Big|~ M_4;
\end{align*}
or
\begin{align*}
x_{i,0} x_{i,1} \quad x_{i+1,1} x_{i+1,2} & \Big|~ M_1 \\
x_{i,0} x_{i,2} \quad x_{i+1,0} x_{i+1,1} & \Big|~ M_2 \\
x_{i,1} x_{i,2} \quad x_{i+1,0} x_{i+1,2} & \Big|~ M_3 \\
x_{i,1} x_{i,2} \quad x_{i+1,0} x_{i+1,3} & \Big|~ M_4.
\end{align*}
\item If the power of $x_{i+1,2}$ in $T$ is 2 then we must be in either of the following cases:
\begin{align*}
x_{i,0} x_{i,1} \quad x_{i+1,0} x_{i+1,2} & \Big|~ M_1 \\
x_{i,0} x_{i,2} \quad x_{i+1,1} x_{i+1,2} & \Big|~ M_2 \\
x_{i,1} x_{i,2} \quad x_{i+1,0} x_{i+1,1} & \Big|~ M_3 \\
x_{i,1} x_{i,2} \quad x_{i+1,0} x_{i+1,1} & \Big|~ M_4;
\end{align*}
or
\begin{align*}
x_{i,0} x_{i,1} \quad x_{i+1,1} x_{i+1,2} & \Big|~ M_1 \\
x_{i,0} x_{i,2} \quad x_{i+1,0} x_{i+1,1} & \Big|~ M_2 \\
x_{i,1} x_{i,2} \quad x_{i+1,0} x_{i+1,1} & \Big|~ M_3 \\
x_{i,1} x_{i,2} \quad x_{i+1,0} x_{i+1,2}& \Big|~ M_4.
\end{align*}
\end{enumerate}
The upshot of this observation is that we can successively distribute $T$ and $Tx_{q,0}$ (without the use of the extra variable $x_{q,0}$) into 4 vertex covers up to the $(q-1)$st triangle in the same way. At each step, moving from the $i$th triangle to the $(i+1)$st triangle, we might end up with a number of different choices. Moreover, if the power of the $(i+1)$st triangle in $T$ is 9, then we can distribute the vertices in the $i$th and the $(i+1)$st triangles, and consider the $(i+2)$nd triangle as our new starting point to repeat the process. The difference, and what makes $T \not\in J^4$ but $Tx_{q,0} \in J^4$, occurs when we need to cover the $q$th triangle and edges connecting the $q$th and the $1$st triangles (i.e., moving from the $(q-1)$st triangle to the last triangle).
\end{obs}

By making use of Observation \ref{obs.distribution}, we can successively distribute the variables appearing in $T$ into the $M_i$s in the same way as that of $Tx_{q,0}$ such that along the process, $M_i$s cover edges in the first $(q-1)$ triangles. It remain to consider how the variables in the $q$th triangle are distributed. We shall show that a contradiction, either that $T \in J^4$ or that $J^4 : T \not= \mf$, is always resulted in.

Notice that when the power of the $(q-1)$st triangle in $T$ is 9, in our distribution process, a power 8 of this triangle is distributed to the $M_i$s, and there is possibly an {\it extra} copy of a variable left. This possible extra variable can then be assigned to one of the $M_i$s. Our argument will complete by exhausting cases depending on how the vertices in the $(q-1)$st triangle are distributed among the $M_i$s and which vertex is possibly treated as the extra one.

There are 3 choices for the possible extra vertex. For each choice of the possible extra vertex, the cases are considered depending on how the other two copies of this vertex are distributed among 4 vertex covers $M_i$s. Observe that if the possible extra vertex is $x_{q-1,t}$ (where $t = 0, 1$ or $2$, and we identify $x_{i,t}$ with $x_{i,t+3}$) then there are 6 cases to consider by assigning $x_{q-1,t}$ to 2 out of the 4 vertex covers $M_i$s. For example, if $x_{q-1,t}$ is assigned to $M_1$ and $M_2$, then there would be two possibilities depending on how $x_{q-1,t+1}$ and $x_{q-1,t+2}$ are distributed. These possibilities are described by conditions:
\begin{align*}
x_{1,0} x_{1,1} \dots \dots x_{q-1,t} x_{q-1,t+1} & \Big|~ M_1 \\
x_{1,0} x_{1,2} \dots \dots x_{q-1,t} x_{q-1,t+2} & \Big|~ M_2 \\
x_{1,1} x_{1,2} \dots \dots x_{q-1,t+1} x_{q-1,t+2} & \Big|~ M_3 \\
x_{1,1} x_{1,2} \dots \dots x_{q-1,t+1} x_{q-1,t+2} & \Big|~ M_4,
\end{align*}
or
\begin{align*}
x_{1,0} x_{1,1} \dots \dots x_{q-1,t} x_{q-1,t+2} & \Big|~ M_1 \\
x_{1,0} x_{1,2} \dots \dots x_{q-1,t} x_{q-1,t+1} & \Big|~ M_2 \\
x_{1,1} x_{1,2} \dots \dots x_{q-1,t+1} x_{q-1,t+2} & \Big|~ M_3 \\
x_{1,1} x_{1,2} \dots \dots x_{q-1,t+1} x_{q-1,t+2} & \Big|~ M_4.
\end{align*}

This case-by-case analysis is quite tedious, but the 18 cases are mostly similar. Thus, we will go through the argument carefully for one case and leave it to the interested reader to check the details of the remaining cases.

Consider the case where $x_{q-1,0}$ is the possible extra vertex, and the other two copies of $x_{q-1,0}$ are in $M_1$ and $M_2$. There are two possibilities depending on how $x_{q-1,1}$ and $x_{q-1,2}$ were distributed:
\begin{align*}
x_{1,0} x_{1,1} \dots \dots x_{q-1,0} x_{q-1,1} & \Big|~ M_1 \\
x_{1,0} x_{1,2} \dots \dots x_{q-1,0} x_{q-1,2} & \Big|~ M_2 \\
x_{1,1} x_{1,2} \dots \dots x_{q-1,1} x_{q-1,2} & \Big|~ M_3 \\
x_{1,1} x_{1,2} \dots \dots x_{q-1,1} x_{q-1,2} & \Big|~ M_4,
\end{align*}
or
\begin{align*}
x_{1,0} x_{1,1} \dots \dots x_{q-1,0} x_{q-1,2} & \Big|~ M_1 \\
x_{1,0} x_{1,2} \dots \dots x_{q-1,0} x_{q-1,1} & \Big|~ M_2 \\
x_{1,1} x_{1,2} \dots \dots x_{q-1,1} x_{q-1,2} & \Big|~ M_3 \\
x_{1,1} x_{1,2} \dots \dots x_{q-1,1} x_{q-1,2} & \Big|~ M_4.
\end{align*}

If it is the first possibility that occurs, and there is in fact no extra copy of $x_{q-1,0}$ (i.e., the power of the $(q-1)$st triangle in $T$ was exactly 8), then this impose the following conditions on the $q$th triangle: $x_{q,0} x_{q,2} ~\big|~ M_1, M_3, M_4$ and $x_{q,1} ~\big|~ M_2$. This implies that the product of the $M_i$s will use 4 copies of either $x_{q,0}$ or $x_{q,2}$. Thus, $Tx_{q,1} \not\in J^4$. If it is the first possibility but there is an extra copy of $x_{q,0}$ left, then we can distribute this extra copy of $x_{q,0}$ to either $M_3$ or $M_4$, say $M_4$. In this case, the conditions imposed on the $q$th triangle are: $x_{q,0} x_{q,2} ~\big|~ M_1$ and $M_3$, $x_{q,1} ~\big|~ M_2$, and $x_{q,2} ~\big|~ M_4$. Thus, to cover the edges of the $q$th triangle, we must have
\begin{align*}
x_{q,0} x_{q,2} & \Big|~ M_1 \\
x_{q,0} x_{q,1} & \Big|~ M_2 \\
x_{q,0} x_{q,2} & \Big|~ M_3 \\
x_{q,1} x_{q,2} & \Big|~ M_4.
\end{align*}
It follows that if $T$ contains 3 copies of $x_{q,0}$ and $x_{q,2}$ then this distribution shows that $T \in J^4$. Otherwise, if $T$ contains, for instance, only 2 copies of $x_{q,0}$, then since the product of 4 vertex covers, as shown, must contain at least 3 copies of $x_{q,0}$, we have that $Tx_{q,1} \not\in J^4$.

If it is the second possibility and there is no extra copy of $x_{q-1,0}$ then conditions imposed on the $q$th triangle are: $x_{q,0} x_{q,1} ~\big|~ M_1, x_{q,1}x_{q,2} ~\big|~ M_2, x_{q,0}x_{q,2} ~\big|~ M_3$ and $M_4$. Thus, the product of the 4 vertex covers contain at least 3 copies of $x_{q,0}$ and $x_{q,2}$. If $T$ has at least 3 copies of $x_{q,0}$ and $x_{q,2}$ then $T \in J^4$. Otherwise, $Tx_{q,1} \not\in J^4$. If it is the second possibility and there is an extra copy of $x_{q-1,0}$ then we can distribute this extra copy of $x_{q,0}$ to either $M_3$ and $M_4$, say $M_4$. In this case, the conditions imposed on the $q$th triangle are: $x_{q,0} x_{q,1} ~\big|~ M_1, x_{q,1}x_{q,2} ~\big|~ M_2, x_{q,0}x_{q,2} ~\big|~ M_3$ and $x_{q,2} ~\big|~ M_4$. Thus, if $T$ contains at least 3 copies of $x_{q,2}$ then by distributing either $x_{q,0}$ or $x_{q,1}$ to $M_4$, we get that $T \in J^4$. Otherwise, $Tx_{q,0} \not\in J^4$.

For the remaining cases, it can be seen that covering the edges of the $q$th triangle and edges connecting to the first and the $(q-1)$st triangles will impose a number of conditions on how vertices of the $q$th triangle in $T$ can be distributed to the 4 vertex covers $M_i$s. These conditions will fall into one of the following situations.
\begin{enumerate}
\item The conditions do not require $\prod_{i=1}^4 M_i$ to contain any vertex of the $q$th triangle with power more than 2. In this case, we can always distribute the vertices of the $q$th powers in $T$ into the 4 vertex covers $M_i$s in a way to satisfy these conditions. We thus have $T \in J^4$.
\item The conditions require $\prod_{i=1}^4 M_i$ to contain one or two vertices of the $q$th triangle with powers at least 3. If $T$ indeed does contain those vertices with powers at least 3, then we can also distribute the vertices of the $q$th triangle in $T$ into the 4 vertex covers $M_i$ to comply with those condition; we then have $T \in J^4$. If, otherwise, $T$ does not contain those one or two vertices with powers at least 3, then the product of $T$ with the third vertex will not be in $J^4$.
\item The conditions require $\prod_{i=1}^4 M_i$ to contain a vertex of the $q$th triangle with power at least 4. In this case, the product of $T$ and another vertex of the $q$th triangle will not be in $J^4$.
\end{enumerate}
\end{proof}

\begin{corollary} \label{cor.depth}
Let $H_q$ be the graph constructed as in the introduction. Let $J = J(H_q) \subseteq R = k[x_{i,j} ~|~ i = 1, \dots, q, j = 0,1,2]$. Then $J$ fails to have non-increasing depth.
\end{corollary}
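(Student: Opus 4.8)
The plan is to deduce this immediately from Propositions \ref{prop.power3} and \ref{prop.power4} together with Remark \ref{rmk.depth0}, which records the standard equivalence $\depth(R/I) > 0 \iff \mf \not\in \Ass(R/I)$ for an ideal $I \subseteq R$.

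First, Proposition \ref{prop.power3} gives $\mf \in \Ass(R/J^3)$; applying Remark \ref{rmk.depth0} with $I = J^3$ yields $\depth(R/J^3) = 0$. Next, Proposition \ref{prop.power4} gives $\mf \not\in \Ass(R/J^4)$; applying Remark \ref{rmk.depth0} with $I = J^4$ yields $\depth(R/J^4) \ge 1$. Combining these,
\[
\depth(R/J^3) = 0 < 1 \le \depth(R/J^4),
\]
so the inequality $\depth(R/J^s) \ge \depth(R/J^{s+1})$ that defines the non-increasing depth property fails at $s = 3$. Hence $J = J(H_q)$ does not have non-increasing depth, for every $q \ge 4$.

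I do not expect any obstacle in this argument: the corollary is a purely formal consequence of the associated-prime computations already established, via the depth-zero dictionary of Remark \ref{rmk.depth0}. The only point worth checking is that Remark \ref{rmk.depth0} is being applied with $R$ the full polynomial ring $k[x_{i,j} ~|~ i = 1, \dots, q,\ j = 0,1,2]$ and $\mf$ its maximal homogeneous ideal, which is exactly the ambient setting fixed in the statements of the two propositions, so there is no mismatch of rings or ideals. It may also be worth noting that, since $\depth(R/J^s)$ is eventually constant by Brodmann \cite{Brod2}, this ``drop then rise'' behavior of the depth at $s = 3, 4$ is a genuine small-power pathology rather than an asymptotic one, and in particular shows that the value $\depth(R/J^s)$ need not be a monotone function of $s$ even for cover ideals of graphs.
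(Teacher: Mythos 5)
Your proposal is correct and is exactly the paper's argument: the corollary is deduced directly from Propositions \ref{prop.power3} and \ref{prop.power4} via the equivalence in Remark \ref{rmk.depth0}, with the depth inequality failing at $s=3$. Your version just spells out the two applications of the remark that the paper leaves implicit.
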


\begin{proof} The conclusion is a direct consequence of Remark \ref{rmk.depth0}, Propositions \ref{prop.power3} and \ref{prop.power4}.
\end{proof}

%%%%%%%%%%%%%%%%%%%%%%%%%%%%%%%%%%%%%%%%%%%%%%%

\section{Other constructions} \label{sec.others}

A natural generalization of the graphs $H_q$s are those of $H_{p,q}$s as constructed in Remark \ref{rmk.Hpq}. We end the paper by showing that those graphs $H_{p,q}$ do not give counterexamples to Conjecture \ref{conj.criticalgraphs}. In fact, we shall show that $H_{p,q}$, for $p > 3$ are not critical graphs.

\begin{theorem} Let $p, q \ge 4$ and let $H_{p,q}$ be constructed as in Remark \ref{rmk.Hpq}. Then, $\chi(H_{p,q}) = p$, but $H_{p,q}$ is not critical $p$-chromatic.
\end{theorem}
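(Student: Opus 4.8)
The plan is to prove the two assertions separately: first that $\chi(H_{p,q}) = p$, and then that $H_{p,q}$ fails to be vertex-critical. The lower bound $\chi(H_{p,q}) \ge p$ is immediate, since the $i$th clique $\{x_{i,0}, \dots, x_{i,p-1}\}$ is a copy of $K_p$ inside $H_{p,q}$. The work is in the upper bound, for which I would exhibit an explicit proper $p$-coloring with colors in $\ZZ/p\ZZ$.

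For the coloring, color the first $q-1$ cliques ``affinely'': assign to $x_{i,j}$ the color $j + (i-1) \pmod p$ for $i = 1, \dots, q-1$. Each such clique then receives all $p$ colors, and a path edge $\{x_{i,j}, x_{i+1,j}\}$ with $i \le q-2$ joins colors differing by $1 \ne 0$, so it is properly colored. It remains to color the last clique by a bijection $c_q$ from $\{0,\dots,p-1\}$ to $\ZZ/p\ZZ$. The only edges not yet accounted for are the path edges between cliques $q-1$ and $q$, which demand $c_q(j) \ne j + (q-2)$ for all $j$, and the twisted edges $\{x_{1,j}, x_{q,p-1-j}\}$, which (after rewriting with $j$ replaced by $p-1-j$) demand $c_q(j) \ne p-1-j$ for all $j$. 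So everything reduces to producing a bijection $c_q$ avoiding, at each position $j$, the two forbidden values $j+(q-2)$ and $p-1-j$.

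The hard part is exactly this last step, and it is where the hypothesis $p \ge 4$ enters. Each position $j$ forbids at most two values, leaving at least $p-2 \ge 2$ admissible colors; dually, each color $v$ is forbidden from at most two positions (namely $j = v-(q-2)$ and $j = p-1-v$). I would then verify Hall's condition for the bipartite ``admissible'' graph between positions and colors: a set $S$ of positions with $|S| \ge 3$ has \emph{every} color in its neighborhood (no color is forbidden from three positions), while for $1 \le |S| \le 2$ the neighborhood still has size at least $p-2 \ge |S|$ since $p \ge 4$. Hall's marriage theorem supplies the bijection $c_q$, completing the proper $p$-coloring and giving $\chi(H_{p,q}) = p$. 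I expect this Hall argument — reconciling the path constraint and the twisted constraint on the last clique simultaneously — to be the only genuine obstacle; note that it collapses precisely at $p = 3$, consistently with the fact that $H_{3,q}$ is critically $4$-chromatic and not $3$-colorable at all.

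Finally, to see that $H_{p,q}$ is not critically $p$-chromatic: the $q$ cliques $\{x_{i,0}, \dots, x_{i,p-1}\}$, $i = 1, \dots, q$, are pairwise vertex-disjoint copies of $K_p$, so deleting any single vertex $x$ destroys at most one of them (and $q \ge 4 \ge 2$), leaving a copy of $K_p$ in $H_{p,q} \setminus x$. Hence $\chi(H_{p,q} \setminus x) \ge p = \chi(H_{p,q})$ for every vertex $x$, so no vertex deletion lowers the chromatic number and $H_{p,q}$ is not vertex-critical.
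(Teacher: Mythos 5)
Your proposal is correct, and the interesting part --- the upper bound $\chi(H_{p,q}) \le p$ --- is handled by a genuinely different argument from the paper's. The paper splits into four cases according to the parities of $p$ and $q$ and writes down an explicit proper $p$-coloring in each case (the odd-$p$ cases require an ad hoc swap of two colors on one clique to repair a conflict); the hypothesis $p \ge 4$ is invoked at specific spots to check that the swapped colors do not collide across the twisted edges. You instead color the first $q-1$ cliques uniformly by $x_{i,j} \mapsto j+(i-1) \bmod p$ and reduce the last clique to finding a system of distinct representatives avoiding two forbidden colors per position, which Hall's theorem supplies: sets of at least $3$ positions see every color (no color is excluded from more than two positions), and sets of size at most $2$ see at least $p-2 \ge 2$ colors, which is exactly where $p \ge 4$ enters. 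I checked the two constraint families you derive (path edges into clique $q$ force $c_q(j) \ne j+(q-2)$, twisted edges force $c_q(j) \ne p-1-j$) and the Hall verification; both are correct. Your route buys uniformity --- one argument instead of four parity cases --- and makes transparent why the bound collapses at $p=3$; the paper's route is fully explicit and needs no matching theory. The lower bound via the embedded $K_p$ and the non-criticality argument (deleting a vertex leaves one of the $q \ge 2$ disjoint copies of $K_p$ intact, so the chromatic number cannot drop) coincide with the paper's.
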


\begin{proof} Clearly, any graph containing a complete subgraph of size $p$ has the chromatic number at least $p$. Thus, it suffices to show that $H_{p,q}$ can be colored using $p$ colors (and since $H_{p,q}$ contains more than one copies of $K_p$, this will also imply that $H_{p,q}$ is not critical $p$-chromatic). Indeed, we can assign $p$ colors to the vertices of $H_{p,q}$ as follows. We shall identify colors congruent modulo $p$.

\begin{figure}[h!]
\centering
\includegraphics[width=3.5in]{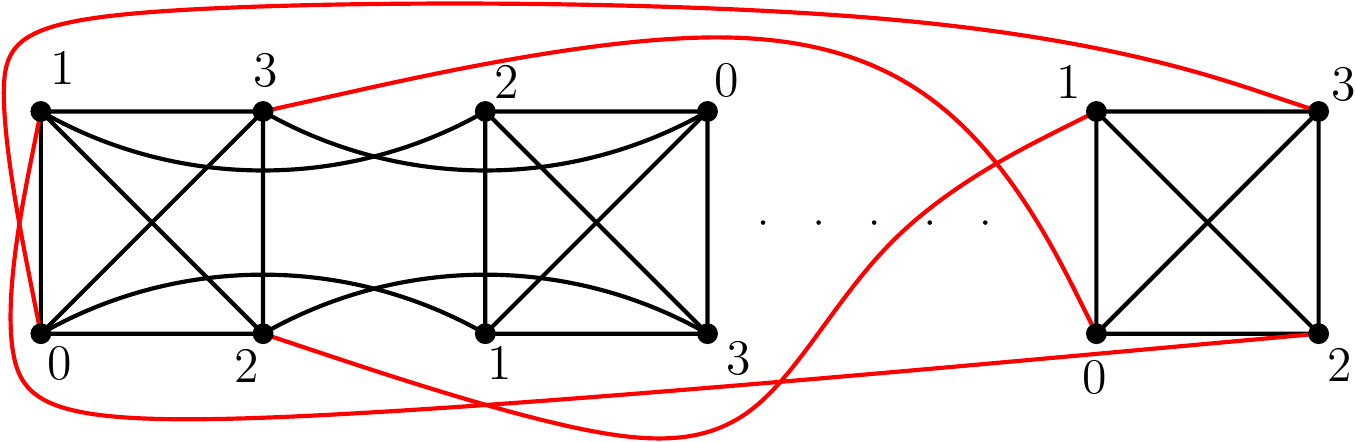}
\caption{A 4-coloring for $H_{4,q}$ when $q$ is odd.}\label{fig:graphcolor4odd}
\end{figure}

\noindent{\it Case 1: $p$ is even and $q$ is odd}. For $1 \le i \le q$ and $i$ is odd, assign to $x_{i,j}$ color $j$ for all $j = 0, \dots, p-1$. For $1 \le i \le q$ and $i$ is even, assign to $x_{i,j}$ color $j+1$, for $j=0, \dots, p-1$. It is easy to see that the vertices on each copy of $K_p$ get different colors. Also, on the $i$th and $(i+1)$st copies of $K_p$, since the parity of $i$ and $i+1$ are different, adjacent vertices $x_{i,j}$ and $x_{i+1,j}$ get different colors. Finally, on the first and the last copies of $K_p$, adjacent vertices are $x_{1,j}$ of color $j$ and $x_{q, p-1-j}$ of color $p-1-j$. Since $p$ is even $j \not= p-1-j$ for any $j$. Figure \ref{fig:graphcolor4odd} gives the assigned 4-coloring for $H_{4,q}$ in this case.

\noindent{\it Case 2: $p$ and $q$ are both even}. For $1 \le i \le q$ and $i$ is odd, assign to $x_{i,j}$ color $j$ for all $j = 0, \dots, p-1$. For $1 \le i \le q$ and $i$ is even, assign to $x_{i,j}$ the color $p+1-j$. Again, the vertices on each copy of $K_p$ get different colors. Also, since $p$ is even $j \not= p+1-j$, adjacent vertices on consecutive copies of $K_p$ also get different colors. On the first and the last copies of $K_p$, adjacent vertices are $x_{1,j}$ of color $j$ and $x_{q,p-1-j}$ of color $j+2$, and we have $j \not\equiv j+2 ~(\!\mod p)$. Figure \ref{fig:graphcolor4} gives the assigned 4-coloring for $H_{4,q}$ in this case.

\begin{figure}[h!]
\centering
\includegraphics[width=3.5in]{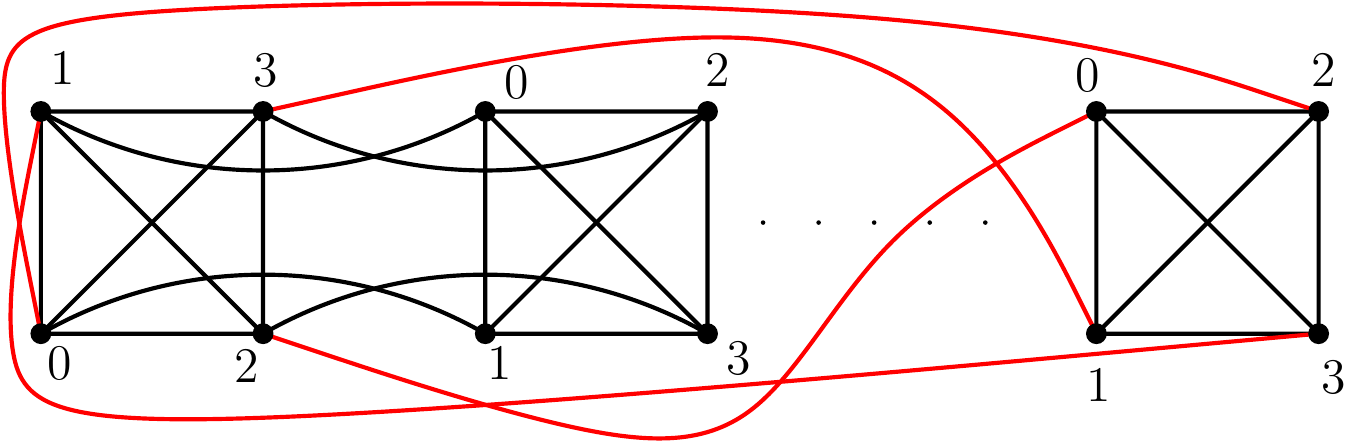}
\caption{A 4-coloring for $H_{4,q}$ when $q$ is even.}\label{fig:graphcolor4}
\end{figure}

\noindent{\it Case 3: $p$ is odd and $q$ is even}. For $1 \le i \le q$ and $i$ is odd, assign to $x_{i,j}$ color $j$ for all $j = 0, \dots, p-1$. For $1 \le i \le q$ and $i$ is even, we assign the colors to $x_{i,j}$s as follows: first, we assign to $x_{i,j}$ color $p-j$, for $j = 0, \dots, p-1$, and then we switch the colors of $x_{i,0}$ and $x_{i,\frac{p+1}{2}}$ (i.e., the vertex $x_{i,0}$ now has color $\frac{p-1}{2}$ and the vertex $x_{i, \frac{p+1}{2}}$ now has color 0). Again, the vertices on each copy of $K_p$ get different colors. On consecutive copies of $K_p$, since $j \not\equiv p-j ~(\!\mod p)$ unless $j = 0$, together with the color switching between $x_{i,0}$ and $x_{i,\frac{p+1}{2}}$, it can be seen that adjacent vertices get different colors. On the first and the last copies of $K_p$, adjacent vertices are $x_{1,j}$ of color $j$ and $x_{q,p-1-j}$ of colors $j+1 \not\equiv j$, except when $j = p-1$ or $j = \frac{p-3}{2}$. Finally, $x_{1,p-1}$ and $x_{q,0}$ are adjacent and of colors $p-1 \not\equiv \frac{p-1}{2}$, while $x_{1,\frac{p-3}{2}}$ and $x_{q, \frac{p+1}{2}}$ are adjacent and of colors $\frac{p-3}{2} \not\equiv 0$ (this is where we make use of the hypothesis that $p \ge 4$). Figure \ref{fig:graphcolor5} gives the assigned 5-coloring for $H_{5,q}$ in this case.

\begin{figure}[h!]
\centering
\includegraphics[height=1.5in]{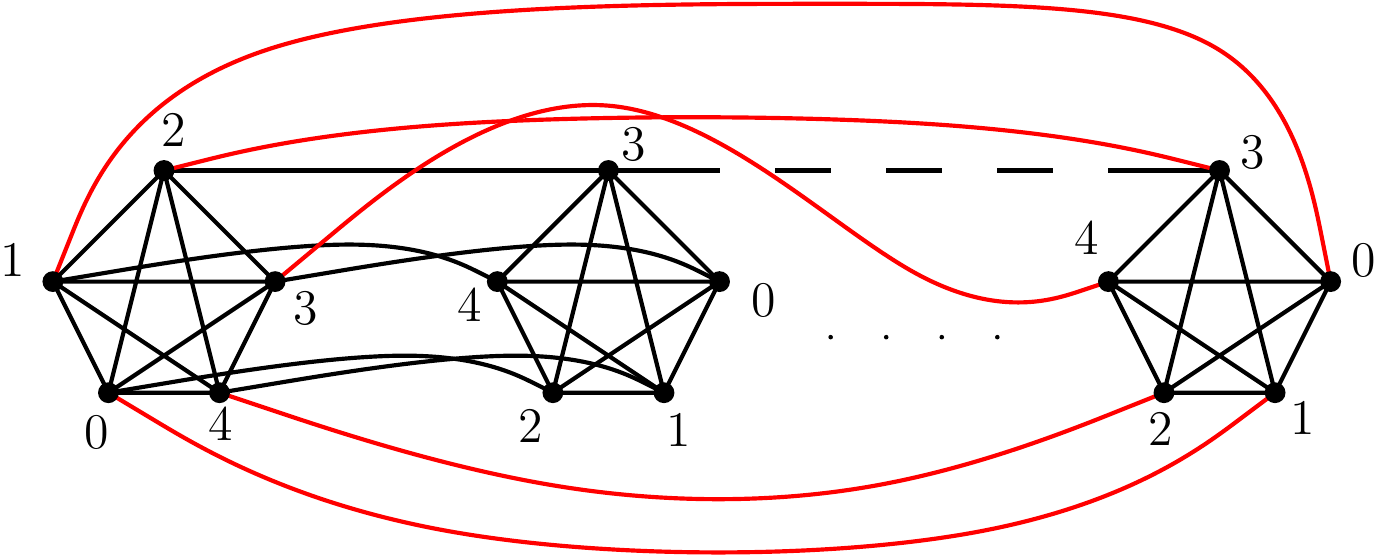}
\caption{A 5-coloring for $H_{5,q}$ when $q$ is even.}\label{fig:graphcolor5}
\end{figure}

\noindent{\it Case 4: $p$ and $q$ are both odd}. For $1 \le i < q-1$ and $i$ is odd, assign to $x_{i,j}$ color $j$ for all $j=0, \dots, p-1$. For $1 \le i \le q-1$ and $i$ is even, assign to $x_{i,j}$ color $j-1$ for all $j=0, \dots, p-1$. Finally, we assign the colors to $x_{q,j}$s as follows: first, we assign to $x_{q,j}$ color $p-j$, for $j=0, \dots, p-1$, and then we switch the colors of $x_{q,0}$ and $x_{q,\frac{p+1}{2}}$ (i.e., the vertex $x_{q,0}$ now has color $\frac{p-1}{2}$ and the vertex $x_{q,\frac{p+1}{2}}$ now has color 0). Clearly, vertices on each copy of $K_p$ get different colors, and adjacent vertices on consecutive copies of $K_p$ (except the last one) get different colors. On the $(q-1)$st and the $q$th copies of $K_p$, adjacent vertices are $x_{q-1,j}$ of color $j-1$ and $x_{q,j}$ of color $p-j$, except exactly when $j=0$ or $j=\frac{p+1}{2}$ due to the color switch. It can be seen that $j-1 \not\equiv p-j$ for all $j \not= \frac{p+1}{2}$. When $j = \frac{p+1}{2}$ the colors of $x_{q-1,\frac{p+1}{2}}$ and $x_{q,\frac{p+1}{2}}$ are $\frac{p-1}{2} \not\equiv 0$. For adjacent vertices between the $q$th and the first copies of $K_p$, the argument follows from the last part of that of Case 3 (and again, we shall need the condition that $p \ge 4$). Figure \ref{fig:graphcolor5odd} gives the assigned 5-coloring for $H_{5,q}$ in this case.

\begin{figure}[h!]
\centering
\includegraphics[height=1.5in]{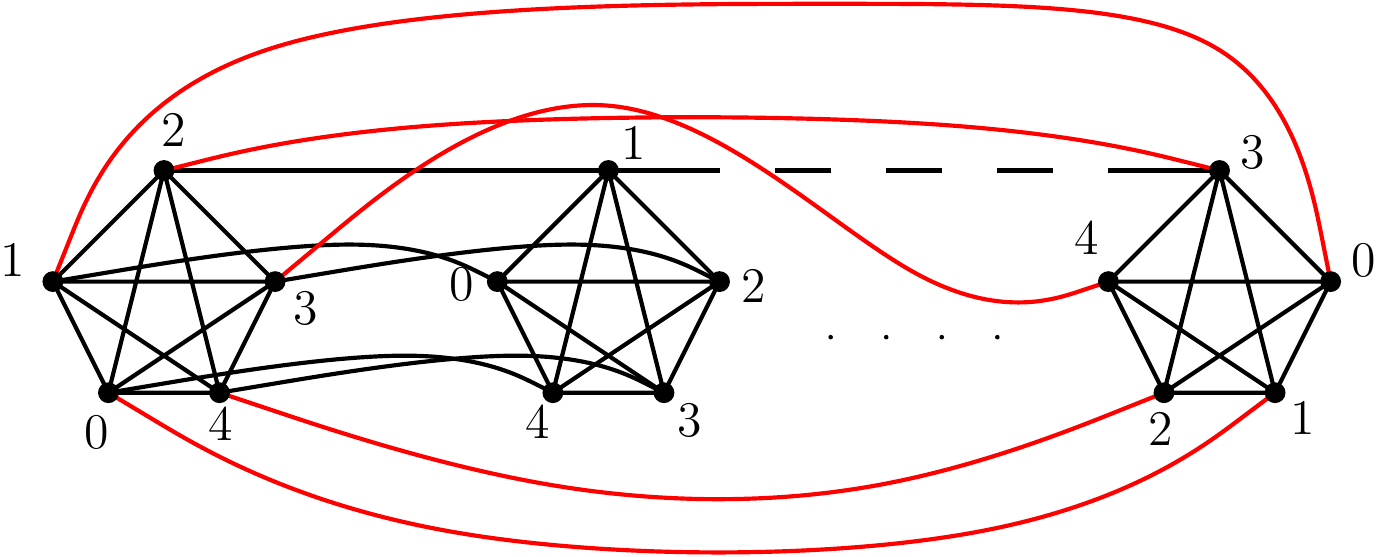}
\caption{A 5-coloring for $H_{5,q}$ when $q$ is odd.}\label{fig:graphcolor5odd}
\end{figure}

\end{proof}

%%%%%%%%%%%%%%%%%%%%%%%%%%%%%%%%%%%%%%%%%%%%%%%%%%%%%%%%%%%%%%%%%%%%%%%%%%%%%%

\end{document}